\documentclass[11pt,a4paper]{article}
\RequirePackage{amsmath}
\RequirePackage{amssymb,latexsym}
\RequirePackage{amsthm}
\RequirePackage{enumerate}
\RequirePackage[hang,flushmargin,bottom,stable]{footmisc}
\RequirePackage[margin=0.35in,format=hang,font=small,labelfont=bf]{caption}
\RequirePackage[normalem]{ulem}
\RequirePackage{needspace}
\RequirePackage{graphicx,tikz}
\RequirePackage[T1]{fontenc}
\RequirePackage[sc]{mathpazo}
\RequirePackage[colorlinks=true,urlcolor=blue,linkcolor=blue,citecolor=blue]{hyperref}
\usepackage{multirow}
\usepackage{setspace}

\hypersetup{
pdfstartview={XYZ null null 1.00},
pdflang={en-GB},
}

\newtheorem{thmO}{Theorem}
\newtheorem{propO}[thmO]{Proposition}

\newenvironment{bulletnums} {\vspace{-9pt}\begin{enumerate}\itemsep0pt} {\end{enumerate}\vspace{-9pt}}

\newcommand{\ba}{\boldsymbol{a}}
\newcommand{\eq}{\mathchoice{\;=\;}{=}{=}{=}}
\newcommand{\Erdos}{Erd\H{o}s}
\newcommand{\geqs}{\geqslant}
\newcommand{\leqs}{\leqslant}
\newcommand{\limsupinfty}[1][n]{\limsup\limits_{#1\rightarrow\infty}}
\newcommand{\msf}[1]{\text{\textsf{\textup{#1}}}}
\newcommand{\msfi}[1]{\text{\textsf{#1}}}
\newcommand{\pho}{\phantom{0}}
\newcommand{\sleqs}{\mathchoice{\;\leqs\;}{\leqs}{\leqs}{\leqs}}

\newcommand{\myTitle}{On balancing two-slice portions of cake}
\title{\textbf{\myTitle}}

\author{$\phantom{{}^\dagger}$David Bevan${}^\dagger$}

\hypersetup{
pdftitle={\myTitle},
pdfauthor={David Bevan},
pdfstartview={XYZ null null 1.00}
}

\date{}

\begin{document}
\maketitle

{\begin{NoHyper}
\let\thefootnote\relax\footnotetext
{${}^\dagger$The University of Strathclyde, Glasgow, Scotland; email: dibevan@hotmail.co.uk.}
\end{NoHyper}}

{\begin{NoHyper}
\let\thefootnote\relax\footnotetext
{2020 Mathematics Subject Classification:
52C10. 
}
\end{NoHyper}}

\begin{abstract}
\noindent
After $n$ radial cuts of a circular cake, it is divided into $n$ slices.
Call an adjacent pair of slices a \emph{portion}.
We exhibit an infinite sequence of cuts such that the ratio between the maximum and minimum sizes of a portion never exceeds~1.755.
This improves on the trivial upper bound of~2, disproving a conjecture of Korsky.
\end{abstract}

\section{Introduction}

De~Bruijn and \Erdos{}~\cite{DeBE1949} were the first to investigate the sequential interval discrepancy problem on a circle.
Given an infinite sequence $\ba=(a_i)_{i=1}^\infty$ of points on a circle --- or equivalently an infinite sequence of radial cuts of a circular cake --- the first $n$ cuts split the cake into $n$ slices.
Let $\mu^r_n(\ba)$ denote the ratio of the maximum size of $r$ consecutive slices to the minimum size of $r$ consecutive slices.
De~Bruijn and \Erdos{} 
were interested in the value of 
\[
\mu_r \eq \inf_{\ba} \,\, \limsupinfty \mu^r_n(\ba) ,
\]
the smallest possible asymptotic least upper bound of the ratios.
They established that $\mu_1=2$ and proved that $\mu_r\geqs1+1/r$ for every~$r$.
They also conjectured that $r(\mu_r-1)$ tends to infinity with increasing~$r$.

Recent work has seen some progress on these questions.
On the one hand, Korsky~\cite{Korsky2026} improved the lower bound to $\mu_r\geqs1+r/(r^2-1)$.
He also conjectured that~$\mu_2=2$.
On the other hand, Cl\'ement and Steinerberger~\cite{CS2025} established that $r(\mu_r-1)\leqs c\log r$ for some absolute constant~$c$.
Also, in another recent paper~\cite{BevanCakeSlicing}, we constructed sequences of cuts from which upper bounds on $\mu_r$ for small values of~$r$ could be calculated:
\[
\mu_3 \sleqs 3/2 , \qquad
\mu_4 \sleqs 3/2 , \qquad
\mu_5 \sleqs 7/5 , \qquad
\mu_6 \sleqs 4/3 , \qquad
\mu_7 \sleqs 5/4 .
\]

Our goal in this short note is to improve on the trivial upper bound of 2 for $\mu_2$. 

\section{A nontrivial upper bound on \texorpdfstring{$\mu_2$}{mu2}}

Korsky proved that $\mu_2\geqs5/3$. He also conjectured 
that $\mu_2=2$.
We disprove Korsky's conjecture.

\begin{thmO}\label{thm}
  If $\rho\approx0.75488$ is the real root of $\rho^2+\rho^3=1$, then $\mu_2\leqs1+\rho$.
\end{thmO}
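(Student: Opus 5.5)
The plan is to build an explicit self-similar sequence of cuts in which every slice has size a power of $\rho$. The point of the equation $\rho^2+\rho^3=1$ is that a slice of size $\rho^k$ can be cut into two slices of sizes $\rho^{k+2}$ and $\rho^{k+3}$. So the multiset of slice sizes always stays inside $\{\rho^k\}$, and the whole process can be described combinatorially as a word over the exponents. Note that $1/\rho$ is the plastic number, so this is a Pisot-type substitution.

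\emph{The substitution.} At each stage the slice sizes lie in a window of consecutive exponents $\{\rho^{j},\rho^{j+1},\rho^{j+2},\rho^{j+3}\}$. The next cut always splits a slice of the largest current size $\rho^j$ into $\rho^{j+2}$ and $\rho^{j+3}$. What is left to choose is:
\begin{itemize}
\item the order in which the largest slices are processed within a round;
\item the orientation of each split, that is, which neighbour receives the smaller piece $\rho^{j+3}$.
\end{itemize}
I would fix these by a substitution rule $\sigma$ on cyclic words over exponent labels. Each label $j$ is replaced by the ordered pair $(j+2,j+3)$ or $(j+3,j+2)$, the choice depending on the labels of its neighbours. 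I would look for a rule such that the small pieces $\rho^{j+3}$ are always placed next to large slices, never next to other small ones. A computer search over short starting words and orientation rules would be my first move to find a rule that works.

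\emph{The invariant.} Let $\rho^j$ be the current largest slice size. I would prove by induction on $n$ that every portion (adjacent pair) has size between $\rho^{j}$ and $\rho^j(1+\rho)$. The target extremes are:
\begin{itemize}
\item the minimum $\rho^{j+2}+\rho^{j+3}=\rho^j$, which is attained exactly when two sibling pieces sit side by side;
\item the maximum $\rho^j+\rho^{j+1}$, which is attained by an uncut large slice next to a medium one.
\end{itemize}
Concretely, the invariant should be a list of forbidden adjacent pairs of exponent labels. Examples are two consecutive labels $\geq j+2$ that are not siblings, and pairs whose sum is $>\rho^j+\rho^{j+1}$, such as $(j,j)$. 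One then checks that a single cut preserves the forbidden-pair list, and that the list still holds when $j$ increases by one once the last $\rho^j$ slice has been cut. Because the rule is local, this is a finite case check on triples or quadruples of consecutive labels. With the invariant in hand, $\mu^2_n(\ba)\leqs1+\rho$ holds for all large $n$, so $\limsupinfty\mu^2_n(\ba)\leqs1+\rho$ and hence $\mu_2\leqs1+\rho$.

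\emph{Main obstacle.} The hard part is controlling the intermediate times within a round. At the start and end of a round the configuration is nicely balanced. Midway through, however, slices of sizes $\rho^j$ and $\rho^{j+3}$ coexist, and a newly created $\rho^{j+3}$ could sit next to a $\rho^{j+2}$ from an earlier cut, giving a portion smaller than $\rho^j$. The ratio in that case is $(\rho^j+\rho^{j+1})/(\rho^{j+2}+\rho^{j+3})$-like, and it is too big. So the ordering of cuts within a round must be chosen so that such bad adjacencies never occur, for instance by sweeping around the circle in one direction and always orienting the small piece towards the not-yet-cut side. I would try that sweeping order first. If it fails, I would enlarge the state of the local case check to include the position relative to the sweep and search for an alternative ordering.
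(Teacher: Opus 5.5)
Your construction is the paper's: always cut a largest slice $\rho^j$ into $\rho^{j+2}$ and $\rho^{j+3}$, and fix orientations by a local rule. However, the whole proof rests on the forbidden-pattern invariant. You never produce it and defer it to a computer search, and the guidance you give for it rests on an arithmetic slip.

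Since $\rho^{j+2}+\rho^{j+3}=\rho^j$, a $\rho^{j+3}$ slice beside \emph{any} $\rho^{j+2}$ slice, sibling or not, gives exactly the permitted minimum. Likewise $(\rho^j+\rho^{j+1})/(\rho^{j+2}+\rho^{j+3})$ is exactly $1+\rho$, not too big. So your ``main obstacle'' is not an obstacle. The genuinely bad adjacencies are between equal sizes. $(j,j)$ is too large and $(j+3,j+3)$ too small. For a list that survives $j\mapsto j+1$, as you require, $(j+1,j+1)$ and $(j+2,j+2)$ must also be excluded, since they later become pairs of largest slices.

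Moreover, the rule you do propose (forbid non-sibling neighbours with both labels $\geqs j+2$) cannot be maintained. Any $\rho^j$ slice cut before the last cut of its round would need two $\rho^{j+1}$ neighbours: a $\rho^j$ neighbour is excluded, and a smaller one would form a forbidden pair with a new piece. The neighbour left beside the new $\rho^{j+3}$ piece is, in the next round, a largest slice with a neighbour of size $\rho^{(j+1)+2}$. So it would have to be the last slice cut in that round. That is impossible once a round has more than three cuts, which eventually always happens. No choice of sweep order or orientation rescues this.

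The missing idea is that ``no two adjacent slices have equal size'' already suffices. Sizes lie in $\{\rho^j,\dots,\rho^{j+3}\}$, so every portion then lies between $\rho^{j+2}+\rho^{j+3}=\rho^j$ and $\rho^j+\rho^{j+1}$. What is needed is a strengthening of this condition that is genuinely inductive. Equal-neighbour avoidance alone is not: a largest slice in context \msf{202} cannot be cut in either orientation. The paper therefore forbids the finite set $A$ of cyclic factors, including length-three ones.
\begin{itemize}
\item Avoiding $A$ forces every \msf0 to occur inside \msf{101} or \msf{201}.
\item Cutting it as \msf{23} in the first case and \msf{32} in the second preserves avoidance of $A$.
\item $A$ survives the relabelling $k\mapsto k-1$ at the end of a round, because \msf{00}, \msf{11}, \msf{22}, \msf{102}, \msf{202} pull back to \msf{11}, \msf{22}, \msf{33}, \msf{213}, \msf{313}.
\end{itemize}
In particular, the order in which largest slices are cut is irrelevant; only the orientation of each cut matters.
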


\begin{proof}
Call an adjacent pair of slices a \emph{portion}.
The following process defines an infinite sequence of cuts such that the ratio between the maximum
and minimum sizes of a portion never exceeds $1+\rho$:

After an initial cut (so that the cake consists of one slice), we repeatedly chose a largest slice (which may be chosen arbitrarily), of size~$s$ say, and cut it to create two slices with sizes $\rho^2s\approx0.56984s$ and $\rho^3s\approx0.43016s$, placing the larger of the two new slices before the smaller unless this would result in adjacent slices having the same size.
We establish below in Proposition~\ref{propNoRepeat} that this ensures that at no time do two adjacent slices have the same size.

The first few cuts are illustrated (linearly) in Figure~\ref{fig}.
Here the leftmost largest slice is always chosen to be cut.

\begin{figure}[ht]
\centering
\begin{tikzpicture}[scale=.2]
\footnotesize
\newcommand{\xWid}{72}
\newcommand{\yHeight}{2.5}
\newcommand{\yInc}{-3.25}
\node at (-2,1*\yInc+\yHeight/2) {\pho1:};
\draw[fill=yellow!6] (0*\xWid,1*\yInc) rectangle (1.*\xWid,1*\yInc+\yHeight);
\node at (0.5*\xWid,1*\yInc+\yHeight/2) {0};

\node at (-2,2*\yInc+\yHeight/2) {\pho2:};
\draw[fill=blue!18] (0*\xWid,2*\yInc) rectangle (0.56984*\xWid,2*\yInc+\yHeight);
\node at (0.28492*\xWid,2*\yInc+\yHeight/2) {2};
\draw[fill=blue!24] (0.56984*\xWid,2*\yInc) rectangle (1.*\xWid,2*\yInc+\yHeight);
\node at (0.78492*\xWid,2*\yInc+\yHeight/2) {3};

\node at (-2,3*\yInc+\yHeight/2) {\pho3:};
\draw[fill=blue!30] (0*\xWid,3*\yInc) rectangle (0.324718*\xWid,3*\yInc+\yHeight);
\node at (0.162359*\xWid,3*\yInc+\yHeight/2) {4};
\draw[fill=blue!36] (0.324718*\xWid,3*\yInc) rectangle (0.56984*\xWid,3*\yInc+\yHeight);
\node at (0.447279*\xWid,3*\yInc+\yHeight/2) {5};
\draw[fill=yellow!24] (0.56984*\xWid,3*\yInc) rectangle (1.*\xWid,3*\yInc+\yHeight);
\node at (0.78492*\xWid,3*\yInc+\yHeight/2) {3};

\node at (-2,4*\yInc+\yHeight/2) {\pho4:};
\draw[fill=yellow!30] (0*\xWid,4*\yInc) rectangle (0.324718*\xWid,4*\yInc+\yHeight);
\node at (0.162359*\xWid,4*\yInc+\yHeight/2) {4};
\draw[fill=yellow!36] (0.324718*\xWid,4*\yInc) rectangle (0.56984*\xWid,4*\yInc+\yHeight);
\node at (0.447279*\xWid,4*\yInc+\yHeight/2) {5};
\draw[fill=blue!42] (0.56984*\xWid,4*\yInc) rectangle (0.754878*\xWid,4*\yInc+\yHeight);
\node at (0.662359*\xWid,4*\yInc+\yHeight/2) {6};
\draw[fill=blue!36] (0.754878*\xWid,4*\yInc) rectangle (1.*\xWid,4*\yInc+\yHeight);
\node at (0.877439*\xWid,4*\yInc+\yHeight/2) {5};

\node at (-2,5*\yInc+\yHeight/2) {\pho5:};
\draw[fill=blue!42] (0*\xWid,5*\yInc) rectangle (0.185037*\xWid,5*\yInc+\yHeight);
\node at (0.0925187*\xWid,5*\yInc+\yHeight/2) {6};
\draw[fill=blue!48] (0.185037*\xWid,5*\yInc) rectangle (0.324718*\xWid,5*\yInc+\yHeight);
\node at (0.254878*\xWid,5*\yInc+\yHeight/2) {7};
\draw[fill=yellow!36] (0.324718*\xWid,5*\yInc) rectangle (0.56984*\xWid,5*\yInc+\yHeight);
\node at (0.447279*\xWid,5*\yInc+\yHeight/2) {5};
\draw[fill=yellow!42] (0.56984*\xWid,5*\yInc) rectangle (0.754878*\xWid,5*\yInc+\yHeight);
\node at (0.662359*\xWid,5*\yInc+\yHeight/2) {6};
\draw[fill=yellow!36] (0.754878*\xWid,5*\yInc) rectangle (1.*\xWid,5*\yInc+\yHeight);
\node at (0.877439*\xWid,5*\yInc+\yHeight/2) {5};

\node at (-2,6*\yInc+\yHeight/2) {\pho6:};
\draw[fill=yellow!42] (0*\xWid,6*\yInc) rectangle (0.185037*\xWid,6*\yInc+\yHeight);
\node at (0.0925187*\xWid,6*\yInc+\yHeight/2) {6};
\draw[fill=yellow!48] (0.185037*\xWid,6*\yInc) rectangle (0.324718*\xWid,6*\yInc+\yHeight);
\node at (0.254878*\xWid,6*\yInc+\yHeight/2) {7};
\draw[fill=blue!54] (0.324718*\xWid,6*\yInc) rectangle (0.43016*\xWid,6*\yInc+\yHeight);
\node at (0.377439*\xWid,6*\yInc+\yHeight/2) {8};
\draw[fill=blue!48] (0.43016*\xWid,6*\yInc) rectangle (0.56984*\xWid,6*\yInc+\yHeight);
\node at (0.5*\xWid,6*\yInc+\yHeight/2) {7};
\draw[fill=yellow!42] (0.56984*\xWid,6*\yInc) rectangle (0.754878*\xWid,6*\yInc+\yHeight);
\node at (0.662359*\xWid,6*\yInc+\yHeight/2) {6};
\draw[fill=yellow!36] (0.754878*\xWid,6*\yInc) rectangle (1.*\xWid,6*\yInc+\yHeight);
\node at (0.877439*\xWid,6*\yInc+\yHeight/2) {5};

\node at (-2,7*\yInc+\yHeight/2) {\pho7:};
\draw[fill=yellow!42] (0*\xWid,7*\yInc) rectangle (0.185037*\xWid,7*\yInc+\yHeight);
\node at (0.0925187*\xWid,7*\yInc+\yHeight/2) {6};
\draw[fill=yellow!48] (0.185037*\xWid,7*\yInc) rectangle (0.324718*\xWid,7*\yInc+\yHeight);
\node at (0.254878*\xWid,7*\yInc+\yHeight/2) {7};
\draw[fill=yellow!54] (0.324718*\xWid,7*\yInc) rectangle (0.43016*\xWid,7*\yInc+\yHeight);
\node at (0.377439*\xWid,7*\yInc+\yHeight/2) {8};
\draw[fill=yellow!48] (0.43016*\xWid,7*\yInc) rectangle (0.56984*\xWid,7*\yInc+\yHeight);
\node at (0.5*\xWid,7*\yInc+\yHeight/2) {7};
\draw[fill=yellow!42] (0.56984*\xWid,7*\yInc) rectangle (0.754878*\xWid,7*\yInc+\yHeight);
\node at (0.662359*\xWid,7*\yInc+\yHeight/2) {6};
\draw[fill=blue!48] (0.754878*\xWid,7*\yInc) rectangle (0.894558*\xWid,7*\yInc+\yHeight);
\node at (0.824718*\xWid,7*\yInc+\yHeight/2) {7};
\draw[fill=blue!54] (0.894558*\xWid,7*\yInc) rectangle (1.*\xWid,7*\yInc+\yHeight);
\node at (0.947279*\xWid,7*\yInc+\yHeight/2) {8};

\node at (-2,8*\yInc+\yHeight/2) {\pho8:};
\draw[fill=blue!60] (0*\xWid,8*\yInc) rectangle (0.0795956*\xWid,8*\yInc+\yHeight);
\node at (0.0397978*\xWid,8*\yInc+\yHeight/2) {9};
\draw[fill=blue!54] (0.0795956*\xWid,8*\yInc) rectangle (0.185037*\xWid,8*\yInc+\yHeight);
\node at (0.132316*\xWid,8*\yInc+\yHeight/2) {8};
\draw[fill=yellow!48] (0.185037*\xWid,8*\yInc) rectangle (0.324718*\xWid,8*\yInc+\yHeight);
\node at (0.254878*\xWid,8*\yInc+\yHeight/2) {7};
\draw[fill=yellow!54] (0.324718*\xWid,8*\yInc) rectangle (0.43016*\xWid,8*\yInc+\yHeight);
\node at (0.377439*\xWid,8*\yInc+\yHeight/2) {8};
\draw[fill=yellow!48] (0.43016*\xWid,8*\yInc) rectangle (0.56984*\xWid,8*\yInc+\yHeight);
\node at (0.5*\xWid,8*\yInc+\yHeight/2) {7};
\draw[fill=yellow!42] (0.56984*\xWid,8*\yInc) rectangle (0.754878*\xWid,8*\yInc+\yHeight);
\node at (0.662359*\xWid,8*\yInc+\yHeight/2) {6};
\draw[fill=yellow!48] (0.754878*\xWid,8*\yInc) rectangle (0.894558*\xWid,8*\yInc+\yHeight);
\node at (0.824718*\xWid,8*\yInc+\yHeight/2) {7};
\draw[fill=yellow!54] (0.894558*\xWid,8*\yInc) rectangle (1.*\xWid,8*\yInc+\yHeight);
\node at (0.947279*\xWid,8*\yInc+\yHeight/2) {8};

\node at (-2,9*\yInc+\yHeight/2) {\pho9:};
\draw[fill=yellow!60] (0*\xWid,9*\yInc) rectangle (0.0795956*\xWid,9*\yInc+\yHeight);
\node at (0.0397978*\xWid,9*\yInc+\yHeight/2) {9};
\draw[fill=yellow!54] (0.0795956*\xWid,9*\yInc) rectangle (0.185037*\xWid,9*\yInc+\yHeight);
\node at (0.132316*\xWid,9*\yInc+\yHeight/2) {8};
\draw[fill=yellow!48] (0.185037*\xWid,9*\yInc) rectangle (0.324718*\xWid,9*\yInc+\yHeight);
\node at (0.254878*\xWid,9*\yInc+\yHeight/2) {7};
\draw[fill=yellow!54] (0.324718*\xWid,9*\yInc) rectangle (0.43016*\xWid,9*\yInc+\yHeight);
\node at (0.377439*\xWid,9*\yInc+\yHeight/2) {8};
\draw[fill=yellow!48] (0.43016*\xWid,9*\yInc) rectangle (0.56984*\xWid,9*\yInc+\yHeight);
\node at (0.5*\xWid,9*\yInc+\yHeight/2) {7};
\draw[fill=blue!54] (0.56984*\xWid,9*\yInc) rectangle (0.675282*\xWid,9*\yInc+\yHeight);
\node at (0.622561*\xWid,9*\yInc+\yHeight/2) {8};
\draw[fill=blue!60] (0.675282*\xWid,9*\yInc) rectangle (0.754878*\xWid,9*\yInc+\yHeight);
\node at (0.71508*\xWid,9*\yInc+\yHeight/2) {9};
\draw[fill=yellow!48] (0.754878*\xWid,9*\yInc) rectangle (0.894558*\xWid,9*\yInc+\yHeight);
\node at (0.824718*\xWid,9*\yInc+\yHeight/2) {7};
\draw[fill=yellow!54] (0.894558*\xWid,9*\yInc) rectangle (1.*\xWid,9*\yInc+\yHeight);
\node at (0.947279*\xWid,9*\yInc+\yHeight/2) {8};

\node at (-2,10*\yInc+\yHeight/2) {10:};
\draw[fill=yellow!60] (0*\xWid,10*\yInc) rectangle (0.0795956*\xWid,10*\yInc+\yHeight);
\node at (0.0397978*\xWid,10*\yInc+\yHeight/2) {9};
\draw[fill=yellow!54] (0.0795956*\xWid,10*\yInc) rectangle (0.185037*\xWid,10*\yInc+\yHeight);
\node at (0.132316*\xWid,10*\yInc+\yHeight/2) {8};
\draw[fill=blue!60] (0.185037*\xWid,10*\yInc) rectangle (0.264633*\xWid,10*\yInc+\yHeight);
\node at (0.224835*\xWid,10*\yInc+\yHeight/2) {9};
\draw[fill=blue!66] (0.264633*\xWid,10*\yInc) rectangle (0.324718*\xWid,10*\yInc+\yHeight);
\node at (0.294675*\xWid,10*\yInc+\yHeight/2) {10};
\draw[fill=yellow!54] (0.324718*\xWid,10*\yInc) rectangle (0.43016*\xWid,10*\yInc+\yHeight);
\node at (0.377439*\xWid,10*\yInc+\yHeight/2) {8};
\draw[fill=yellow!48] (0.43016*\xWid,10*\yInc) rectangle (0.56984*\xWid,10*\yInc+\yHeight);
\node at (0.5*\xWid,10*\yInc+\yHeight/2) {7};
\draw[fill=yellow!54] (0.56984*\xWid,10*\yInc) rectangle (0.675282*\xWid,10*\yInc+\yHeight);
\node at (0.622561*\xWid,10*\yInc+\yHeight/2) {8};
\draw[fill=yellow!60] (0.675282*\xWid,10*\yInc) rectangle (0.754878*\xWid,10*\yInc+\yHeight);
\node at (0.71508*\xWid,10*\yInc+\yHeight/2) {9};
\draw[fill=yellow!48] (0.754878*\xWid,10*\yInc) rectangle (0.894558*\xWid,10*\yInc+\yHeight);
\node at (0.824718*\xWid,10*\yInc+\yHeight/2) {7};
\draw[fill=yellow!54] (0.894558*\xWid,10*\yInc) rectangle (1.*\xWid,10*\yInc+\yHeight);
\node at (0.947279*\xWid,10*\yInc+\yHeight/2) {8};

\node at (-2,11*\yInc+\yHeight/2) {11:};
\draw[fill=yellow!60] (0*\xWid,11*\yInc) rectangle (0.0795956*\xWid,11*\yInc+\yHeight);
\node at (0.0397978*\xWid,11*\yInc+\yHeight/2) {9};
\draw[fill=yellow!54] (0.0795956*\xWid,11*\yInc) rectangle (0.185037*\xWid,11*\yInc+\yHeight);
\node at (0.132316*\xWid,11*\yInc+\yHeight/2) {8};
\draw[fill=yellow!60] (0.185037*\xWid,11*\yInc) rectangle (0.264633*\xWid,11*\yInc+\yHeight);
\node at (0.224835*\xWid,11*\yInc+\yHeight/2) {9};
\draw[fill=yellow!66] (0.264633*\xWid,11*\yInc) rectangle (0.324718*\xWid,11*\yInc+\yHeight);
\node at (0.294675*\xWid,11*\yInc+\yHeight/2) {10};
\draw[fill=yellow!54] (0.324718*\xWid,11*\yInc) rectangle (0.43016*\xWid,11*\yInc+\yHeight);
\node at (0.377439*\xWid,11*\yInc+\yHeight/2) {8};
\draw[fill=blue!60] (0.43016*\xWid,11*\yInc) rectangle (0.509755*\xWid,11*\yInc+\yHeight);
\node at (0.469958*\xWid,11*\yInc+\yHeight/2) {9};
\draw[fill=blue!66] (0.509755*\xWid,11*\yInc) rectangle (0.56984*\xWid,11*\yInc+\yHeight);
\node at (0.539798*\xWid,11*\yInc+\yHeight/2) {10};
\draw[fill=yellow!54] (0.56984*\xWid,11*\yInc) rectangle (0.675282*\xWid,11*\yInc+\yHeight);
\node at (0.622561*\xWid,11*\yInc+\yHeight/2) {8};
\draw[fill=yellow!60] (0.675282*\xWid,11*\yInc) rectangle (0.754878*\xWid,11*\yInc+\yHeight);
\node at (0.71508*\xWid,11*\yInc+\yHeight/2) {9};
\draw[fill=yellow!48] (0.754878*\xWid,11*\yInc) rectangle (0.894558*\xWid,11*\yInc+\yHeight);
\node at (0.824718*\xWid,11*\yInc+\yHeight/2) {7};
\draw[fill=yellow!54] (0.894558*\xWid,11*\yInc) rectangle (1.*\xWid,11*\yInc+\yHeight);
\node at (0.947279*\xWid,11*\yInc+\yHeight/2) {8};

\node at (-2,12*\yInc+\yHeight/2) {12:};
\draw[fill=yellow!60] (0*\xWid,12*\yInc) rectangle (0.0795956*\xWid,12*\yInc+\yHeight);
\node at (0.0397978*\xWid,12*\yInc+\yHeight/2) {9};
\draw[fill=yellow!54] (0.0795956*\xWid,12*\yInc) rectangle (0.185037*\xWid,12*\yInc+\yHeight);
\node at (0.132316*\xWid,12*\yInc+\yHeight/2) {8};
\draw[fill=yellow!60] (0.185037*\xWid,12*\yInc) rectangle (0.264633*\xWid,12*\yInc+\yHeight);
\node at (0.224835*\xWid,12*\yInc+\yHeight/2) {9};
\draw[fill=yellow!66] (0.264633*\xWid,12*\yInc) rectangle (0.324718*\xWid,12*\yInc+\yHeight);
\node at (0.294675*\xWid,12*\yInc+\yHeight/2) {10};
\draw[fill=yellow!54] (0.324718*\xWid,12*\yInc) rectangle (0.43016*\xWid,12*\yInc+\yHeight);
\node at (0.377439*\xWid,12*\yInc+\yHeight/2) {8};
\draw[fill=yellow!60] (0.43016*\xWid,12*\yInc) rectangle (0.509755*\xWid,12*\yInc+\yHeight);
\node at (0.469958*\xWid,12*\yInc+\yHeight/2) {9};
\draw[fill=yellow!66] (0.509755*\xWid,12*\yInc) rectangle (0.56984*\xWid,12*\yInc+\yHeight);
\node at (0.539798*\xWid,12*\yInc+\yHeight/2) {10};
\draw[fill=yellow!54] (0.56984*\xWid,12*\yInc) rectangle (0.675282*\xWid,12*\yInc+\yHeight);
\node at (0.622561*\xWid,12*\yInc+\yHeight/2) {8};
\draw[fill=yellow!60] (0.675282*\xWid,12*\yInc) rectangle (0.754878*\xWid,12*\yInc+\yHeight);
\node at (0.71508*\xWid,12*\yInc+\yHeight/2) {9};
\draw[fill=blue!66] (0.754878*\xWid,12*\yInc) rectangle (0.814963*\xWid,12*\yInc+\yHeight);
\node at (0.78492*\xWid,12*\yInc+\yHeight/2) {10};
\draw[fill=blue!60] (0.814963*\xWid,12*\yInc) rectangle (0.894558*\xWid,12*\yInc+\yHeight);
\node at (0.85476*\xWid,12*\yInc+\yHeight/2) {9};
\draw[fill=yellow!54] (0.894558*\xWid,12*\yInc) rectangle (1.*\xWid,12*\yInc+\yHeight);
\node at (0.947279*\xWid,12*\yInc+\yHeight/2) {8};
\end{tikzpicture}
\caption{The first twelve cuts: new slices are shown in blue; slices of size $\rho^k$ are labelled $k$.}\label{fig}
\end{figure}

After any number of cuts, slices can take at most four distinct sizes, in the ratios \mbox{$1:\rho:\rho^2:\rho^3$}.
On the assumption that no two adjacent slices have the same size, the greatest possible ratio between the maximum and minimum sizes of a two-slice portion is thus
\[
\frac{1+\rho}{\rho^2+\rho^3} \eq 1+\rho . \qedhere
\]
\end{proof}

It remains to guarantee that we can avoid adjacent slices of the same size.
To do so, by re-scaling we represent a sliced cake by a word over the alphabet $\{\msf0,\msf1,\msf2,\msf3\}$.
For example, the penultimate cake in Figure~\ref{fig} (with eleven slices) is represented by \msf{21231231201}.

It then follows from 
the following proposition ---
by induction on the number of cuts --- that we can avoid creating adjacent slices of the same size, since the requirements of the proposition are satisfied after the first three cuts.

\begin{propO}\label{propNoRepeat}
  Suppose $w$ is a finite word over the alphabet $\{\msf0,\msf1,\msf2,\msf3\}$ of length at least three, with no cyclic factor in 
  \[
  A\eq\{\, \msf{00},\msf{11},\msf{22},\msf{33}, \; \msf{102},\msf{103},\msf{202},\msf{203},\msf{301},\msf{302},\msf{303}, \; \msf{013},\msf{213},\msf{313} \,\} .
  \]
  If $w$ contains at least one~\msf0, then    
  let $w_{\msf{23}}$ be the word formed from $w$ by replacing some \msf0 with~\msf{23}, and
  $w_{\msf{32}}$ the word formed from $w$ by replacing the same \msf0 with~\msf{32}.
  Then either $w_{\msf{23}}$ or $w_{\msf{32}}$ cyclically avoids all the factors in~$A$.
  
  Moreover, if $w$ contains no~\msf0, then the word $w^-$ formed from $w$ by replacing each letter $\msfi{k}$ with $\msfi{k}-1$ also cyclically avoids every factor in~$A$.
\end{propO}

\begin{proof}
  To confirm avoidance of factors in $A$, we need only consider the two letters immediately before and the two letters immediately after a \msf0 in $w$.
  Since $w$ has length at least three, these letters are unchanged in $w_{\msf{23}}$ and $w_{\msf{32}}$.
  
  Since $w$ avoids every forbidden factor in~$A$, any \msf0 must be preceded by either \msf1 or \msf2 and must be followed by~\msf1.
  Moreover, the \msf1 following the \msf0 must itself be followed by either \msf0 or~\msf2.
  There are thus just two cases:
  
\begin{bulletnums}
  \item If the \msf0 occurs in a \msf{101}, then $w_{\msf{23}}$ avoids all the forbidden factors:
  \[
  \{\msf0,\msf2,\msf3\}\msf{101}\{\msf0,\msf2\} \;\mapsto\; \{\msf0,\msf2,\msf3\}\msf{1231}\{\msf0,\msf2\} .
  \]
  \item If the \msf0 occurs in a \msf{201}, then $w_{\msf{32}}$ avoids all the forbidden factors:
  \[
  \{\msf0,\msf1,\msf3\}\msf{201}\{\msf0,\msf2\} \;\mapsto\; \{\msf0,\msf1,\msf3\}\msf{2321}\{\msf0,\msf2\} .
  \]
\end{bulletnums}  
  
If $w$ contains no~\msf0, then 
$w^-$
only contains \msf0, \msf1 and \msf2. 
Now, $w^-$ can't have \msf{00}, \msf{11}, \msf{22}, \msf{102} or \msf{202} as a factor since $w$ avoids \msf{11}, \msf{22}, \msf{33}, \msf{213} and \msf{313}. 
Hence, $w^-$ also avoids every factor in~$A$.
\end{proof}


\vspace{12pt}
\begin{flushright}
\emph{Soli Deo gloria!}
\end{flushright}

\bibliographystyle{plain}
{\footnotesize\bibliography{../bib/mybib}}

\end{document}